\numberwithin{equation}{section}
\theoremstyle{plain}
\newtheorem{thm}{\protect\theoremname}[section]
\theoremstyle{plain}
\newtheorem{cor}[thm]{\protect\corollaryname}
\theoremstyle{plain}
\newtheorem{lem}[thm]{\protect\lemmaname}
\newenvironment{proof}[1][\protect\proofname]{\par
\normalfont\topsep6\p@\@plus6\p@\relax
\trivlist
\itemindent\parindent
\item[\hskip\labelsep
\scshape
#1]\ignorespaces
}{%
\endtrivlist\@endpefalse
}
\providecommand{\proofname}{Proof}
\theoremstyle{remark}
\newtheorem{rem}[thm]{\protect\remarkname}
\newcommand{\lyxaddress}[1]{
\par {\raggedright #1
\vspace{1.4em}
\noindent\par}
}
\providecommand{\corollaryname}{Corollary}
\providecommand{\lemmaname}{Lemma}
\providecommand{\remarkname}{Remark}
\providecommand{\theoremname}{Theorem}
\begin{document}

\title{Harmonic Forms on Manifolds with Non-Negative Bakry-Émery-Ricci Curvature}

\author{Matheus Vieira}
\maketitle
\begin{abstract}
In this paper we prove that on a complete smooth metric measure space
with non-negative Bakry-Émery-Ricci curvature if the space of weighted
$L^{2}$ harmonic one-forms is non-trivial then the weighted volume
of the manifold is finite and universal cover of the manifold splits
isometrically as the product of the real line with an hypersurface.
\end{abstract}
Keywords: Harmonic forms, Non-negative Bakry-Émery-Ricci curvature,
Smooth metric measure spaces.

\section{Introduction}

The theory of $L^{2}$ harmonic forms together with harmonic functions
has been used to study the geometry and topology of complete manifolds
(for example in \cite{LT1992,CSZ1997,LW2001,LW2002,CCZ2008}). The
theory of smooth metric measure spaces has also been attracting some
interest due to its connection with the Ricci flow (see \cite{C2010})
and as an independent research topic: there are works about volume
estimates \cite{WW2009,CZ2013}, essential spectrum of the drifted
Laplacian \cite{S2013}, etc. In \cite{L2003} Lott discussed (among
other things) the topology of compact smooth metric measure spaces
with non-negative Bakry-Émery-Ricci curvature using harmonic forms,
and in \cite{MW2011} Munteanu and Wang obtained geometrical and topological
results when these manifolds are non-compact using harmonic functions
(see also \cite{MW2012,MW2012-2}). In this paper we investigate harmonic
forms on non-compact smooth metric measure spaces with non-negative
Bakry-Émery-Ricci curvature.

Recall that a smooth metric measure space $(M,g,e^{-f}dv)$ is a Riemannian
manifold $(M,g)$ together with a smooth function $f$ and a measure
$e^{-f}dv$. The Bakry-Émery-Ricci curvature is defined by the formula
\[
\mbox{Ric}_{f}=\mbox{Ric}+\mbox{Hess}f.
\]
A differential form $\omega$ is called an $L_{f}^{2}$ differential
form if 
\[
\int_{M}|\omega|^{2}e^{-f}dv<\infty.
\]
It is well known that the formal adjoint of the exterior derivative
$d$ with respect to the $L_{f}^{2}$ inner product is given by the
formula
\[
\delta_{f}=\delta+\iota_{\nabla f},
\]
where $\iota_{\nabla f}$ denotes the interior product with the vector
field $\nabla f$ (see \cite{B1999} or Lemma \ref{adjoint}). The
$f$-Hodge Laplacian is defined by the formula
\[
\Delta_{f}=-(d\delta_{f}+\delta_{f}d).
\]
The space of $L_{f}^{2}$ harmonic one-forms is the set of all $L_{f}^{2}$
one-forms $\omega$ satisfying the equation 
\[
\Delta_{f}\omega=0.
\]

In this article we prove the following result (Theorem \ref{theorem}).
\begin{thm}
Let $(M^{n},g,e^{-f}dv)$ be a complete non-compact smooth metric
measure space with non-negative Bakry-Émery-Ricci curvature. If the
space of $L_{f}^{2}$ harmonic one-forms is non-trivial then the weighted
volume of $M^{n}$ is finite, that is
\[
\textnormal{vol}_{f}(M^{n})=\int_{M^{n}}e^{-f}dv<\infty,
\]
and the universal covering splits isometrically as $\tilde{M}^{n}=\mathbb{R}\times N^{n-1}$.
\end{thm}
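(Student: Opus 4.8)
The plan is to fix a nontrivial element $\omega$ of the space of $L_f^2$ harmonic one-forms and to read off everything from the weighted Weitzenböck (Bochner) formula
\[
\tfrac{1}{2}\Delta_f|\omega|^2=|\nabla\omega|^2+\langle\Delta_f\omega,\omega\rangle+\mathrm{Ric}_f(\omega,\omega),
\]
which, since $\Delta_f\omega=0$ and $\mathrm{Ric}_f\ge 0$, reduces to $\tfrac{1}{2}\Delta_f|\omega|^2=|\nabla\omega|^2+\mathrm{Ric}_f(\omega,\omega)\ge 0$; thus $|\omega|^2$ is $f$-subharmonic. Using the Kato inequality $|\nabla\omega|\ge|\nabla|\omega||$ one checks, just as in the unweighted setting, that for each $\varepsilon>0$ the smooth function $u_\varepsilon:=\sqrt{|\omega|^2+\varepsilon^2}$ satisfies $\Delta_f u_\varepsilon\ge 0$, while $0\le u_\varepsilon-\varepsilon\le|\omega|$ lies in $L_f^2$ because $\omega$ does.

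Then I would run the usual cutoff (Caccioppoli) argument, carried out with the weighted measure $e^{-f}\,dv$. For a cutoff $\phi$ with $\phi\equiv 1$ on $B_R$, $\mathrm{supp}\,\phi\subset B_{2R}$ and $|\nabla\phi|\le 2/R$, I integrate $\phi^2(u_\varepsilon-\varepsilon)\Delta_f u_\varepsilon\,e^{-f}\ge 0$ by parts, using the weighted Green identity $\int\langle\nabla a,\nabla b\rangle e^{-f}=-\int a\,\Delta_f b\,e^{-f}$ valid for $a$ of compact support, and apply Cauchy--Schwarz to obtain $\int\phi^2|\nabla u_\varepsilon|^2 e^{-f}\le 4\int(u_\varepsilon-\varepsilon)^2|\nabla\phi|^2 e^{-f}\le (16/R^2)\int_{B_{2R}}|\omega|^2 e^{-f}$. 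Since $\omega\in L_f^2$ the right-hand side tends to $0$ as $R\to\infty$, so $\nabla u_\varepsilon\equiv 0$ for every $\varepsilon$, hence $|\omega|^2\equiv c^2$ for a constant $c$, with $c\ne 0$ by nontriviality of $\omega$. Therefore $\mathrm{vol}_f(M)=c^{-2}\int_M|\omega|^2 e^{-f}\,dv<\infty$, which is the first assertion.

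For the splitting, observe that $|\omega|^2$ being constant gives $\Delta_f|\omega|^2=0$, so the Bochner identity forces $|\nabla\omega|^2=0$: the one-form $\omega$ is parallel. Passing to the universal cover $\tilde M$, which is complete and simply connected, and pulling $\omega$ back yields a parallel, nowhere-vanishing one-form $\tilde\omega$. Being parallel, $\tilde\omega$ is closed, hence exact by simple connectivity: $\tilde\omega=dh$ with $|\nabla h|\equiv c>0$ and $\mathrm{Hess}\,h=\nabla\tilde\omega=0$. Thus $(1/c)\nabla h$ is a parallel unit vector field; equivalently $T\tilde M$ is the orthogonal sum $\mathbb{R}\nabla h\oplus(\nabla h)^\perp$ of two parallel subbundles, and de Rham's decomposition theorem (the factor tangent to $\nabla h$ being complete, simply connected and one-dimensional, hence $\mathbb{R}$) yields an isometric splitting $\tilde M^n=\mathbb{R}\times N^{n-1}$, with $N$ a level hypersurface of $h$.

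The main obstacle is essentially bookkeeping: verifying the weighted Bochner formula and the inequality $\Delta_f u_\varepsilon\ge 0$ with the sign conventions of the paper, and justifying the weighted integration by parts on the noncompact $M$. Both are handled by the $\varepsilon$-regularization, which removes the mild non-smoothness of $|\omega|$ on its zero set, together with compactly supported cutoffs, which kill all boundary terms; once this is set up the argument is routine. Noncompactness of $M$ is not actually used — it is merely the case of interest.
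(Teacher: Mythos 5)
Your proposal is correct and follows essentially the same route as the paper: the weighted Bochner formula plus the (plain) Kato inequality make $|\omega|$ an $f$-subharmonic quantity, a cutoff/Caccioppoli argument with the weight $e^{-f}dv$ forces $|\omega|\equiv c\neq 0$ (hence $\mathrm{vol}_f(M)<\infty$), and then Bochner again gives $\nabla\omega=0$ and de Rham yields the splitting of $\tilde M$. Your $\varepsilon$-regularization $u_\varepsilon=\sqrt{|\omega|^2+\varepsilon^2}$ and the explicit construction of the parallel unit field on $\tilde M$ are only minor technical refinements of the paper's argument.
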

A corresponding result for compact manifolds is discussed in \cite{L2003}
(Theorem 1, item 3).

In particular we obtain the following result (Corollary \ref{corollary 1})
concerning the vanishing of $L_{f}^{2}$ harmonic forms when the Bakry-Émery-Ricci
curvature is non-negative and the first eigenvalue of the $f$-Laplacian
is positive, which applies to non-trivial gradient Ricci steady solitons
(see Corolary \ref{corollary 2}).
\begin{cor}
Let $(M,g,e^{-f}dv)$ be a complete non-compact smooth metric measure
space with non-negative Bakry-Émery-Ricci curvature. If the first
eigenvalue of the $f$-Laplacian is positive then the space of $L_{f}^{2}$
harmonic one-forms is trivial.
\end{cor}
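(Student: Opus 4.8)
The plan is to obtain the corollary immediately from Theorem~\ref{theorem} together with the elementary observation that a complete smooth metric measure space of finite weighted volume cannot have a positive first eigenvalue for the $f$-Laplacian. I would argue by contradiction: assume the space of $L_f^2$ harmonic one-forms is non-trivial. Then Theorem~\ref{theorem} applies and yields $\mathrm{vol}_f(M)=\int_M e^{-f}\,dv<\infty$; the splitting of the universal cover will not be needed, only the finiteness of the weighted volume.

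The next step is to show that $\mathrm{vol}_f(M)<\infty$ forces $\lambda_1(M)=0$, where $\lambda_1(M)$ denotes the bottom of the $L_f^2$-spectrum of $\Delta_f$ acting on functions, characterized variationally by
\[
\lambda_1(M)=\inf\left\{\frac{\int_M|\nabla\phi|^2\,e^{-f}\,dv}{\int_M\phi^2\,e^{-f}\,dv}\;:\;\phi\in C_c^\infty(M),\ \phi\not\equiv0\right\}.
\]
Fix $p\in M$. Since $M$ is complete, the distance function to $p$ is proper, so for each $R>0$ one can pick a cutoff $\phi_R\in C_c^\infty(M)$ with $\phi_R\equiv1$ on the ball $B_p(R)$, $\operatorname{supp}\phi_R\subset B_p(2R)$, and $|\nabla\phi_R|\le 2/R$. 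Then $\int_M|\nabla\phi_R|^2e^{-f}\,dv\le (4/R^2)\,\mathrm{vol}_f(M)\to0$ as $R\to\infty$, while $\int_M\phi_R^2e^{-f}\,dv\ge\mathrm{vol}_f(B_p(R))\to\mathrm{vol}_f(M)>0$. Hence the Rayleigh quotients of the functions $\phi_R$ tend to $0$, so $\lambda_1(M)=0$, contradicting the hypothesis that the first eigenvalue of the $f$-Laplacian is positive. Therefore the space of $L_f^2$ harmonic one-forms must be trivial.

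I do not anticipate a real obstacle here, since all the work is carried by Theorem~\ref{theorem}. The only matters worth a sentence of care are pinning down the convention that the \emph{first eigenvalue} means the bottom of the spectrum (equivalently the infimum of the Rayleigh quotient, so that its positivity is a genuine spectral-gap statement) and noting that Lipschitz compactly supported cutoffs are admissible test functions after the usual smoothing; both are routine given that $M$ is complete. An equivalent alternative would be to invoke directly the fact, established inside the proof of Theorem~\ref{theorem}, that a non-trivial $L_f^2$ harmonic one-form is parallel with constant pointwise norm — which already forces $\mathrm{vol}_f(M)<\infty$ — and then run the same cutoff argument.
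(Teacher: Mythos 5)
Your proposal is correct and is essentially the paper's own argument: both rest on the cutoff computation showing that finite weighted volume forces $\lambda_1(\Delta_f)=0$, combined with Theorem \ref{theorem}. You merely arrange it as a contradiction starting from a non-trivial harmonic one-form, whereas the paper first deduces $\mathrm{vol}_f(M)=\infty$ from $\lambda_1>0$ and then applies the theorem; the two are logically equivalent.
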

We also obtain the following vanishing result (Corollary \ref{corollary 3})
for the space of $L_{f}^{2}$ harmonic forms when the function $f$
is bounded.
\begin{cor}
Let $(M,g,e^{-f}dv)$ be a complete non-compact smooth metric measure
space with non-negative Bakry-Émery-Ricci curvature. If the function
$f$ is bounded then the space of $L_{f}^{2}$ harmonic one-forms
 is trivial.
\end{cor}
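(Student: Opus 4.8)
My plan is to derive this as a consequence of Theorem \ref{theorem} by contradiction, the only extra ingredient being a lower bound on the weighted volume. So I would assume, contrary to the assertion, that $(M,g,e^{-f}dv)$ carries a non-trivial $L_f^2$ harmonic one-form. Theorem \ref{theorem} then applies and yields in particular
\[
\mathrm{vol}_f(M)=\int_M e^{-f}\,dv<\infty
\]
(it also gives the splitting $\tilde M^n=\mathbb{R}\times N^{n-1}$, but for this corollary only the finiteness of the weighted volume will be used).

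Next I would invoke the fact that a complete non-compact smooth metric measure space with $\mathrm{Ric}_f\ge 0$ and bounded $f$ has \emph{infinite} weighted volume; this is what produces the contradiction. To see it, write $|f|\le k$, so that $e^{-k}\le e^{-f}\le e^{k}$ and the measure $e^{-f}dv$ is uniformly comparable to $dv$; hence it suffices to show $\mathrm{vol}(M)=\infty$. Because $f$ is bounded, the weighted Bishop--Gromov comparison of Wei--Wylie \cite{WW2009} holds with constants depending only on $n$ and $k$ and, crucially, uniformly over the center of the ball. Feeding this relative volume comparison into the standard ray argument of Yau --- pick a ray from a base point (which exists since $M$ is non-compact and complete), compare the weighted volumes of balls centered along it, and add up over a disjoint family of annuli around the base point --- gives a linear lower bound $\mathrm{vol}_f(B(p,r))\ge c\,r$ for $r\ge 1$, so $\mathrm{vol}_f(M)=\infty$.

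Comparing the two conclusions $\mathrm{vol}_f(M)<\infty$ and $\mathrm{vol}_f(M)=\infty$ gives the desired contradiction, so the space of $L_f^2$ harmonic one-forms must be trivial. The only genuine content is the weighted-volume lower bound; everything else is bookkeeping, and the main obstacle is simply to have that estimate available in the required generality (otherwise one reproves it from \cite{WW2009} together with Yau's argument, as indicated). For comparison, one could instead try to use the splitting $\tilde M^n=\mathbb{R}\times N^{n-1}$ directly: along each line $\mathbb{R}\times\{p\}$ the Bakry--Émery--Ricci curvature in the $\partial_t$ direction vanishes, so $t\mapsto f$ is convex there and hence constant, being bounded, which forces $\mathrm{vol}_{\tilde f}(\tilde M^n)=\infty$; but transferring this back to $M^n$ would require knowing $\pi_1(M^n)$ is finite, which is not automatic, so the volume route above is the clean one.
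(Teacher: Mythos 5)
Your argument is exactly the paper's: apply Theorem \ref{theorem} to get $\mathrm{vol}_f(M)<\infty$ and contradict the Wei--Wylie theorem \cite{WW2009} that a complete non-compact space with $\mathrm{Ric}_f\ge 0$ and bounded $f$ has infinite weighted volume. The paper simply cites that volume result rather than re-deriving it via the Yau ray argument, but otherwise the two proofs coincide.
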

In section 2 we prove that $L_{f}^{2}$ harmonic forms of any degree
are closed and co-closed, in section 3 we prove Bochner's formula
and Kato's inequality for $L_{f}^{2}$ harmonic one-forms, in section
4 we prove Theorem \ref{theorem} and some corollaries.

The author wishes to thank his thesis advisor and the referee for
many helpful suggestions. This work is part of the author's Ph.D.
thesis, written under the supervision of Detang Zhou at Universidade
Federal Fluminense.

\section{$L_{f}^{2}$ Harmonic Forms are Closed and Co-Closed}

In this section we extend to smooth metric measure spaces the known
fact that $L^{2}$ harmonic forms of any degree are closed and co-closed.

Recall that the dot product of differential forms (of the same degree)
on a Riemannian manifold with a volume element $dv$ is defined by
\[
(\omega\cdot\eta)dv=\omega\wedge*\eta.
\]
The $L_{f}^{2}$ inner product of $L_{f}^{2}$ differential forms
is defined by 
\begin{align*}
(\omega,\eta)_{L_{f}^{2}(M)} & =\int_{M}\omega\cdot\eta\, e^{-f}dv.
\end{align*}
Notice that the $L_{f}^{2}$ inner product is also defined between
any two differential forms if one of them is compactly supported.

First we prove for completeness the known fact that $\delta_{f}$
is the formal adjoint of $d$.
\begin{lem}
\label{adjoint}Let $\omega$ be a $(p-1)$-form and $\eta$ be a
$p$-form on a smooth metric measure space $(M,g,e^{-f}dv)$. If one
of these differential forms is compactly supported then the following
identity holds
\begin{equation}
(d\omega,\eta)_{L_{f}^{2}(M)}=(\omega,\delta_{f}\eta)_{L_{f}^{2}(M)}.\label{eq adjoint}
\end{equation}
\end{lem}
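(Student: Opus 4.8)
The plan is to reduce the identity to the classical (unweighted) integration-by-parts formula on a Riemannian manifold, absorbing the weight $e^{-f}$ into the forms by a careful bookkeeping of the Hodge star and the interior product. First I would recall that without the weight, for a compactly supported $(p-1)$-form $\omega$ and a $p$-form $\eta$ one has $\int_M d\omega\wedge *\eta = \int_M \omega\wedge *(\delta\eta)$, which follows from Stokes' theorem applied to $d(\omega\wedge *\eta) = d\omega\wedge *\eta - (-1)^{p-1}\omega\wedge d*\eta$ together with the definition $\delta\eta = (-1)^{?}{*}d{*}\eta$ (the sign depending on $n$ and $p$, but it is exactly the sign that makes this work). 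So the only new contribution in the weighted setting comes from differentiating $e^{-f}$.

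Concretely, I would write $(d\omega,\eta)_{L^2_f(M)} = \int_M (d\omega\cdot\eta)\,e^{-f}dv = \int_M d\omega\wedge *(e^{-f}\eta)$, and apply Stokes to the $(n-1)$-form $\omega\wedge *(e^{-f}\eta)$. This produces two terms: one in which $d$ hits $*(e^{-f}\eta)$ and, after rewriting via the star, yields $(\omega,\delta(e^{-f}\eta))_{\text{unweighted}}$; the key computation is then the Leibniz-type identity $\delta(e^{-f}\eta) = e^{-f}\delta\eta - e^{-f}\iota_{\nabla f}\eta = e^{-f}\delta_f\eta$. This last identity is the heart of the matter: I would verify it either by the pointwise formula $\delta\eta = -\sum_i \iota_{e_i}\nabla_{e_i}\eta$ in an orthonormal frame, where the derivative of the scalar $e^{-f}$ contributes $-\sum_i \iota_{e_i}(e^{-f}(-\partial_i f)\eta) = e^{-f}\iota_{\nabla f}\eta$ with the opposite sign, or equivalently by the star formula using $\delta = \pm *d*$ and $d(e^{-f}\alpha) = e^{-f}(d\alpha - df\wedge\alpha)$ together with the adjointness of $df\wedge\cdot$ and $\iota_{\nabla f}$ under the star. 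Either way one lands on $(d\omega,\eta)_{L^2_f} = \int_M \omega\wedge *(e^{-f}\delta_f\eta) = (\omega,\delta_f\eta)_{L^2_f}$.

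The one genuine subtlety — the main obstacle — is sign bookkeeping: the classical codifferential carries a sign $(-1)^{n(p+1)+1}$ (or a variant, depending on conventions) in front of $*d*$, and one must check that the sign produced by Stokes' theorem on $\omega\wedge *\eta$ matches it so that no stray $(-1)$ survives, and moreover that the sign in front of the $\iota_{\nabla f}$ term comes out as $+$ rather than $-$. I would handle this cleanly by avoiding the star-sign altogether: use the frame formula $\delta_f\eta = \delta\eta + \iota_{\nabla f}\eta$ as the definition given in the paper, pair it against $d\omega$ pointwise, and integrate the identity $\operatorname{div}_f X := \operatorname{div} X - \langle\nabla f, X\rangle$ for the appropriately chosen vector field $X$ built from $\omega$ and $\eta$ (the divergence theorem in the weighted form $\int_M \operatorname{div}_f X\, e^{-f}dv = 0$ for compactly supported $X$, which itself is just $\int_M \operatorname{div}(e^{-f}X)\,dv = 0$). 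Once $X$ is chosen so that $\operatorname{div}_f X = d\omega\cdot\eta - \omega\cdot\delta_f\eta$, the lemma follows immediately; constructing that $X$ and checking the contraction identities is the only real work, and it is routine.
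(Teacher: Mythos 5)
Your proposal is correct and follows essentially the same route as the paper: apply Stokes' theorem to the $(n-1)$-form $\omega\wedge *\eta\, e^{-f}$ and let the derivative of the weight produce the interior-product term via the adjointness of $df\wedge\cdot$ and $\iota_{\nabla f}$ under the Hodge star (the paper absorbs $e^{-f}$ into $\omega$ rather than into $\eta$, an immaterial difference). The only blemish is a sign slip in your displayed Leibniz identity, which should read $\delta(e^{-f}\eta)=e^{-f}\delta\eta+e^{-f}\iota_{\nabla f}\eta=e^{-f}\delta_{f}\eta$, consistent with your own frame computation.
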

\begin{proof}
Assume one of these differential forms compactly supported, say $\omega$.
We have 
\begin{align*}
(d\omega,\eta)_{L_{f}^{2}(M)} & =\int_{M}d\omega\cdot\eta\, e^{-f}dv\\
 & =\int_{M}d\omega\wedge*\eta\, e^{-f}\\
 & =\int_{M}\left(d(e^{-f}\omega)\wedge*\eta+e^{-f}df\wedge\omega\wedge*\eta\right)\\
 & =\int_{M}d(e^{-f}\omega\wedge*\eta)+\int_{M}\omega\wedge\left((-1)^{p}d*\eta+(-1)^{p-1}df\wedge*\eta\right)e^{-f}.
\end{align*}
Using Stokes theorem and the identities (see \cite{KLZ2008} for a
list of formulas)
\[
(-1)^{p}d*\eta=*\delta\eta
\]
and
\begin{align*}
df\wedge*\eta & =(-1)^{p-1}*\iota_{\nabla f}\eta
\end{align*}
we get identity (\ref{eq adjoint}).
\end{proof}
Now we prove that $L_{f}^{2}$ harmonic forms of any degree are closed
and co-closed. Here we adapt from Carron \cite{C2007}.
\begin{lem}
\label{closed}Every $L_{f}^{2}$ harmonic form (of any degree) on
a complete smooth metric measure space $(M,g,e^{-f}dv)$ is closed
and co-closed. In other words, 
\begin{equation}
d\omega=0\label{eq closed 1}
\end{equation}
 and 
\begin{equation}
\delta_{f}\omega=0.\label{eq closed 2}
\end{equation}
\end{lem}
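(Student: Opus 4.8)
The plan is to run the standard Carron-type integration-by-parts argument, adapted to the weighted setting: test the equation $\Delta_f\omega=0$ against $\phi_R^2\omega$, where $\phi_R$ is a compactly supported cutoff, and let $R\to\infty$. Fix a point $o\in M$, and for each $R>0$ choose a smooth function $\phi_R\colon M\to[0,1]$ with $\phi_R\equiv1$ on the ball $B_R(o)$, $\phi_R\equiv0$ outside $B_{2R}(o)$, and $|\nabla\phi_R|\le C/R$; such a function exists on any complete manifold by smoothing the Lipschitz distance function, and this is the only place where completeness enters. Since $\omega$ is harmonic it is smooth by elliptic regularity (the $f$-Hodge Laplacian has the same principal symbol as the ordinary Hodge Laplacian), so $\phi_R^2\omega$ is an admissible compactly supported test form for Lemma \ref{adjoint}.

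Write $\omega$ for the harmonic form, of some degree $p$. Starting from
\[
0=(\Delta_f\omega,\phi_R^2\omega)_{L_f^2(M)}=-(d\delta_f\omega,\phi_R^2\omega)_{L_f^2(M)}-(\delta_f d\omega,\phi_R^2\omega)_{L_f^2(M)},
\]
I would use \eqref{eq adjoint} to move $d$ across the first pairing (applied to $\delta_f\omega$ and $\phi_R^2\omega$) and $\delta_f$ across the second (applied to $\phi_R^2\omega$ and $d\omega$), obtaining
\[
(d\omega,d(\phi_R^2\omega))_{L_f^2(M)}+(\delta_f\omega,\delta_f(\phi_R^2\omega))_{L_f^2(M)}=0.
\]
Expanding with the Leibniz rules $d(\phi_R^2\omega)=\phi_R^2\,d\omega+2\phi_R\,d\phi_R\wedge\omega$ and $\delta_f(\phi_R^2\omega)=\phi_R^2\,\delta_f\omega-2\phi_R\,\iota_{\nabla\phi_R}\omega$ turns this into
\[
\int_M\phi_R^2\bigl(|d\omega|^2+|\delta_f\omega|^2\bigr)e^{-f}dv=-2\int_M\phi_R\bigl((d\phi_R\wedge\omega)\cdot d\omega-(\iota_{\nabla\phi_R}\omega)\cdot\delta_f\omega\bigr)e^{-f}dv.
\]

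To conclude, I would bound the right-hand side using the pointwise estimates $|d\phi_R\wedge\omega|\le|\nabla\phi_R|\,|\omega|$ and $|\iota_{\nabla\phi_R}\omega|\le|\nabla\phi_R|\,|\omega|$, Cauchy–Schwarz, and $2ab\le\tfrac12a^2+2b^2$; the $\tfrac12a^2$ contributions get absorbed into the left-hand side, and since $|\nabla\phi_R|$ is supported in $B_{2R}(o)\setminus B_R(o)$ one is left with
\[
\int_M\phi_R^2\bigl(|d\omega|^2+|\delta_f\omega|^2\bigr)e^{-f}dv\le\frac{C}{R^2}\int_{B_{2R}(o)\setminus B_R(o)}|\omega|^2e^{-f}dv.
\]
Because $\omega\in L_f^2$, the right-hand side is the tail of a convergent integral and tends to $0$ as $R\to\infty$, forcing $\int_M(|d\omega|^2+|\delta_f\omega|^2)e^{-f}dv=0$; this is exactly \eqref{eq closed 1} and \eqref{eq closed 2}. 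The main difficulties are technical rather than conceptual: constructing the smooth cutoff on a general complete manifold and justifying that Lemma \ref{adjoint} applies to $\phi_R^2\omega$ — handled by smoothing of the distance function and elliptic regularity for $\Delta_f$, respectively — while the manipulation with the Leibniz rules and the absorption step is routine.
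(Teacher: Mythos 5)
Your proposal is correct and follows essentially the same Carron-type cutoff argument as the paper: integrate against $\phi_R^2\omega$, use Lemma \ref{adjoint} to move $d$ and $\delta_f$, and let $R\to\infty$ using $\omega\in L_f^2$. The only (cosmetic) difference is that the paper organizes the computation around $|d(\phi\omega)|^2+|\delta_f(\phi\omega)|^2$ so the cross terms combine into the exact identity $\int|\nabla\phi|^2|\omega|^2e^{-f}dv$, while you handle them by Cauchy--Schwarz, Young, and absorption, which works just as well.
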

\begin{proof}
We can choose a smooth function $\phi$ on $M$ such that $\phi=1$
in $B_{R}$, $\phi=0$ in $M\setminus B_{2R}$ and $|\nabla\phi|\leq\frac{2}{R}$
in $B_{2R}\setminus B_{R}$. Here $B_{R}$ denotes a ball with center
in a fixed point and radius $R$. We have 
\begin{align}
|d(\phi\omega)|_{L_{f}^{2}(M)}^{2} & =|d\phi\wedge\omega|_{L_{f}^{2}(M)}^{2}+(d\phi^{2}\wedge\omega,d\omega)_{L_{f}^{2}(M)}+|\phi d\omega|_{L_{f}^{2}(M)}^{2}\nonumber \\
 & =|d\phi\wedge\omega|_{L_{f}^{2}(M)}^{2}+(d(\phi^{2}\omega),d\omega)_{L_{f}^{2}(M)}\nonumber \\
 & =|d\phi\wedge\omega|_{L_{f}^{2}(M)}^{2}+(\phi^{2}\omega,\delta_{f}d\omega)_{L_{f}^{2}(M)}.\label{eq closed 3}
\end{align}
Notice that we used Lemma \ref{adjoint} in equation (\ref{eq closed 3}).
We also have the identity
\begin{align}
\delta_{f}(\phi\omega) & =\delta(\phi\omega)+\iota_{\nabla f}(\phi\omega)\nonumber \\
 & =\phi\delta\omega-\iota_{\nabla\phi}\omega+\phi\iota_{\nabla f}\omega\nonumber \\
 & =\phi\delta_{f}\omega-\iota_{\nabla\phi}\omega,
\end{align}
so 
\begin{align}
|\delta_{f}(\phi\omega)|_{L_{f}^{2}(M)}^{2} & =|\iota_{\nabla\phi}\omega|_{L_{f}^{2}(M)}^{2}-(\iota_{\nabla\phi^{2}}\omega,\delta_{f}\omega)_{L_{f}^{2}(M)}+|\phi\delta_{f}\omega|_{L_{f}^{2}(M)}^{2}\nonumber \\
 & =|\iota_{\nabla\phi}\omega|_{L_{f}^{2}(M)}^{2}+(\delta_{f}(\phi^{2}\omega),\delta_{f}\omega)_{L_{f}^{2}(M)}\nonumber \\
 & =|\iota_{\nabla\phi}\omega|_{L_{f}^{2}(M)}^{2}+(\phi^{2}\omega,d\delta_{f}\omega)_{L_{f}^{2}(M)}.\label{eq closed 4}
\end{align}
Notice that we used Lemma \ref{adjoint} again in equation (\ref{eq closed 4}).
Using the identity
\[
|d\phi\wedge\omega|^{2}+|\iota_{\nabla\phi}\omega|^{2}=|\nabla\phi|^{2}|\omega|^{2}
\]
and the assumption $\omega$ is an $L_{f}^{2}$ harmonic form, adding
equations (\ref{eq closed 3}) and (\ref{eq closed 4}) we get
\begin{equation}
|d(\phi\omega)|_{L_{f}^{2}(M)}^{2}+|\delta_{f}(\phi\omega)|_{L_{f}^{2}(M)}^{2}=\int_{M}|\nabla\phi|^{2}|\omega|^{2}e^{-f}dv\label{eq closed 5}
\end{equation}
Since $\omega$ is an $L_{f}^{2}$ differential form sending $R\to\infty$
in equation (\ref{eq closed 5}) we conclude that 
\[
|d\omega|_{L_{f}^{2}(M)}^{2}+|\delta_{f}\omega|_{L_{f}^{2}(M)}^{2}=0,
\]
which implies identities (\ref{eq closed 1}) and (\ref{eq closed 2}).
\end{proof}

\section{Bochner's Formula and Kato's Inequality}

Let us recall classical Bochner's formula for one-forms
\[
\frac{1}{2}\Delta|\omega|^{2}=|\nabla\omega|^{2}+\Delta\omega\cdot\omega+\mbox{Ric}(\omega,\omega)
\]
(here we use the same notation to represent the dual of a one-form)
and Kato's inequality for $L^{2}$ harmonic one-forms
\[
|\nabla\omega|^{2}\geq\frac{n}{n-1}|\nabla|\omega||^{2}.
\]

In this section we extend these formulas to smooth metric measure
spaces.

First we extend Bochner's formula, which is also found in Lott's paper
\cite{L2003} (equation 2.10). We thank the referee for pointing out
this reference.
\begin{lem}
\label{bochner}Let $\omega$ be a one-form on a smooth metric measure
space $(M,g,e^{-f}dv)$. Then the following identity holds 
\begin{equation}
\frac{1}{2}\Delta_{f}|\omega|^{2}=|\nabla\omega|^{2}+\Delta_{f}\omega\cdot\omega+\textnormal{Ric}_{f}(\omega,\omega).\label{eq bochner 1}
\end{equation}
\end{lem}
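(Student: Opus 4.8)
The plan is to derive the weighted Bochner formula \eqref{eq bochner 1} from the classical (un-weighted) Bochner formula by computing the correction terms that arise from replacing $\Delta$ with $\Delta_f$ and $\mathrm{Ric}$ with $\mathrm{Ric}_f$. Recall that for a function $u$ one has $\Delta_f u = \Delta u - \nabla f\cdot\nabla u$, and for a one-form $\omega$ the $f$-Hodge Laplacian is $\Delta_f\omega = -(d\delta_f + \delta_f d)\omega$. First I would apply the classical Bochner formula to $\omega$,
\[
\tfrac12\Delta|\omega|^2 = |\nabla\omega|^2 + \Delta\omega\cdot\omega + \mathrm{Ric}(\omega,\omega),
\]
where $\Delta = -(d\delta+\delta d)$ is the ordinary Hodge Laplacian, and then account for the three differences between this identity and \eqref{eq bochner 1}: the left side changes by $-\tfrac12\nabla f\cdot\nabla|\omega|^2$, the Ricci term changes by $\mathrm{Hess}\,f(\omega,\omega)$, and the Laplacian-of-$\omega$ term changes by $(\Delta_f\omega - \Delta\omega)\cdot\omega$. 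So the whole lemma reduces to the pointwise identity
\[
(\Delta_f\omega - \Delta\omega)\cdot\omega = -\tfrac12\nabla f\cdot\nabla|\omega|^2 - \mathrm{Hess}\,f(\omega,\omega).
\]

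The second step is to verify this identity. Since $\Delta_f - \Delta = -(\delta_f - \delta)d - d(\delta_f - \delta) = -\iota_{\nabla f}\,d - d\,\iota_{\nabla f}$, I would recognize the operator $\iota_{\nabla f}d + d\iota_{\nabla f}$ as the Lie derivative $\mathcal L_{\nabla f}$ along $\nabla f$ (Cartan's magic formula), so that $\Delta_f\omega - \Delta\omega = -\mathcal L_{\nabla f}\omega$. The Lie derivative of a one-form is $(\mathcal L_X\omega)(Y) = X(\omega(Y)) - \omega([X,Y])$, and a direct computation in terms of the Levi-Civita connection gives $(\mathcal L_{\nabla f}\omega)(Y) = \nabla_{\nabla f}\omega(Y) + \omega(\nabla_Y\nabla f) = (\nabla_{\nabla f}\omega)(Y) + \mathrm{Hess}\,f(\omega^\sharp, Y)$ (using the symmetry of the Hessian and torsion-freeness). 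Dotting with $\omega$ and using $\tfrac12\nabla f\cdot\nabla|\omega|^2 = (\nabla_{\nabla f}\omega)\cdot\omega$ yields exactly the required identity. Plugging back in completes the proof.

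Alternatively — and this is probably the cleaner route to write — I would work in terms of the dual vector field and the rough (connection) Laplacian: the classical Weitzenböck identity reads $\Delta\omega = \mathrm{tr}\,\nabla^2\omega - \mathrm{Ric}(\omega,\cdot)$, so $\Delta_f\omega = \mathrm{tr}\,\nabla^2\omega - \nabla_{\nabla f}\omega - \mathrm{Ric}(\omega,\cdot) - \mathrm{Hess}\,f(\omega^\sharp,\cdot) = \Delta_f^{\mathrm{rough}}\omega - \mathrm{Ric}_f(\omega,\cdot)$, where $\Delta_f^{\mathrm{rough}} = \mathrm{tr}\,\nabla^2 - \nabla_{\nabla f}$ is the drifted Bochner Laplacian. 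Then the standard computation $\tfrac12\Delta_f|\omega|^2 = |\nabla\omega|^2 + (\Delta_f^{\mathrm{rough}}\omega)\cdot\omega$ (which follows from $\tfrac12\Delta|\omega|^2 = |\nabla\omega|^2 + (\mathrm{tr}\,\nabla^2\omega)\cdot\omega$ by subtracting $\tfrac12\nabla f\cdot\nabla|\omega|^2 = (\nabla_{\nabla f}\omega)\cdot\omega$ from both sides) gives \eqref{eq bochner 1} upon substituting for $\Delta_f^{\mathrm{rough}}\omega$.

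The main obstacle is purely bookkeeping: correctly identifying that the first-order drift term $-\nabla_{\nabla f}\omega$ is precisely what converts $\Delta$ into $\Delta_f$ on the level of the rough Laplacian, and that the zeroth-order term it leaves behind on one-forms is exactly $\mathrm{Hess}\,f$ acting as a symmetric endomorphism — i.e., confirming the sign and the symmetrization so that $\mathrm{Ric} + \mathrm{Hess}\,f = \mathrm{Ric}_f$ assembles cleanly. No analysis or curvature estimates are needed; it is an algebraic identity that holds pointwise for any smooth one-form, so there are no completeness or integrability hypotheses to invoke.
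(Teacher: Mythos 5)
Your proposal is correct and follows essentially the same route as the paper: apply the classical Bochner formula and reduce the lemma to the pointwise correction identity $\left(d\iota_{\nabla f}+\iota_{\nabla f}d\right)\omega\cdot\omega=\tfrac{1}{2}\nabla f\cdot\nabla|\omega|^{2}+\mathrm{Hess}\,f(\omega,\omega)$, which is exactly the paper's equation (\ref{eq bochner 2}). The only difference is how that identity is verified: the paper expands $d\iota_{\nabla f}\omega+\iota_{\nabla f}d\omega$ in a local orthonormal frame, whereas you recognize $d\iota_{\nabla f}+\iota_{\nabla f}d$ as the Lie derivative $\mathcal{L}_{\nabla f}$ via Cartan's formula and compute $\mathcal{L}_{\nabla f}\omega=\nabla_{\nabla f}\omega+\mathrm{Hess}\,f(\omega^{\sharp},\cdot)$ invariantly --- the same computation in coordinate-free form, and your Weitzenböck variant is likewise a correct repackaging.
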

\begin{proof}
By a simple computation we have 
\[
\Delta_{f}=\Delta-d\iota_{\nabla f}-\iota_{\nabla f}d,
\]
so using the classical Bochner's formula for one-forms we get 
\begin{align*}
\frac{1}{2}\Delta_{f}|\omega|^{2} & =\frac{1}{2}\Delta|\omega|^{2}-\frac{1}{2}\nabla f\cdot\nabla|\omega|^{2}\\
 & =|\nabla\omega|^{2}+\Delta\omega\cdot\omega+\mbox{Ric}(\omega,\omega)-\frac{1}{2}\nabla f\cdot\nabla|\omega|^{2}\\
 & =|\nabla\omega|^{2}+\Delta_{f}\omega\cdot\omega+\mbox{Ric}_{f}(\omega,\omega)\\
 & -\frac{1}{2}\nabla f\cdot\nabla|\omega|^{2}-\mbox{Hess}f(\omega,\omega)+d\iota_{\nabla f}\omega\cdot\omega+\iota_{\nabla f}d\omega\cdot\omega.
\end{align*}
It remains to prove that 
\begin{equation}
-\frac{1}{2}\nabla f\cdot\nabla|\omega|^{2}-\mbox{Hess}f(\omega,\omega)+d\iota_{\nabla f}\omega\cdot\omega+\iota_{\nabla f}d\omega\cdot\omega=0.\label{eq bochner 2}
\end{equation}
We can choose a local orthonormal basis $e_{1},\dots,e_{n}$ with
dual basis $\theta_{1},\dots,\theta_{n}$ and assume the connection
forms $\theta_{ij}$ vanish on a fixed point. Writing $\omega=\omega_{i}\theta_{i}$
we have 
\begin{equation}
\frac{1}{2}\nabla f\cdot\nabla|\omega|^{2}=f_{i}\omega_{j}\omega_{ji}\label{eq bochner 3}
\end{equation}
and 
\begin{equation}
\mbox{Hess}f(\omega,\omega)=\omega_{i}\omega_{j}f_{ij}.\label{eq bochner 4}
\end{equation}
Computing 
\begin{align*}
d\iota_{\nabla f}\omega+\iota_{\nabla f}d\omega & =d(f_{i}\omega_{i})+i_{\nabla f}(\omega_{ij}\theta_{j}\wedge\theta_{i})\\
 & =\omega_{i}df_{i}+f_{i}d\omega_{i}+\omega_{ij}f_{j}\theta_{i}-\omega_{ij}f_{i}\theta_{j}\\
 & =\omega_{i}f_{ij}\theta_{j}+f_{i}\omega_{ij}\theta_{j}+\omega_{ij}f_{j}\theta_{i}-\omega_{ij}f_{i}\theta_{j}\\
 & =(\omega_{i}f_{ij}+f_{i}\omega_{ji})\theta_{j},
\end{align*}
we obtain 
\begin{equation}
d\iota_{\nabla f}\omega\cdot\omega+\iota_{\nabla f}d\omega\cdot\omega=\omega_{i}\omega_{j}f_{ij}+f_{i}\omega_{j}\omega_{ji}.\label{eq bochner 5}
\end{equation}
Equations (\ref{eq bochner 3}), (\ref{eq bochner 4}) and (\ref{eq bochner 5})
imply equation (\ref{eq bochner 2}).
\end{proof}
Now we extend Kato's inequality.
\begin{lem}
\label{kato}Let $\omega$ be an $L_{f}^{2}$ harmonic one-form on
a smooth metric measure space $(M^{n},g,e^{-f}dv)$. Then the following
inequality holds 
\begin{equation}
|\nabla\omega|^{2}\geq\frac{1}{n-1}\left(|\nabla|\omega||-|\nabla f\cdot\omega|\right)^{2}+|\nabla|\omega||^{2}.\label{eq kato}
\end{equation}
Moreover, if equality holds in (\ref{eq kato}) then
\begin{equation}
\nabla\omega=\begin{pmatrix}\lambda_{1} & 0 & \dots & 0 & 0\\
0 & \lambda_{2} & \dots & 0 & 0\\
\vdots & \vdots & \ddots & \vdots & \vdots\\
0 & 0 & \dots & \lambda_{2} & 0\\
0 & 0 & \dots & 0 & \lambda_{2}
\end{pmatrix}\label{eq kato matrix}
\end{equation}
where $\lambda_{1}=\nabla f\cdot\omega-(n-1)\lambda_{2}$.\end{lem}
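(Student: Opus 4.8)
The plan is to feed the harmonicity of $\omega$ into Lemma~\ref{closed} and extract the two structural facts that do all the work. Writing $\nabla\omega$ in a local orthonormal frame as the $(0,2)$-tensor with entries $\omega_{ij}=\nabla_j\omega_i$, closedness $d\omega=0$ says $\nabla\omega$ is symmetric, $\omega_{ij}=\omega_{ji}$, while co-closedness $\delta_f\omega=0$, together with $\delta_f\omega=\delta\omega+\iota_{\nabla f}\omega$ and $\delta\omega=-\sum_i\omega_{ii}$, says its trace is prescribed, $\sum_i\omega_{ii}=\nabla f\cdot\omega$. The inequality is local and both sides vanish where $\omega=0$, so I would fix a point $p$ with $\omega(p)\neq0$ and choose the frame so that $\omega=|\omega|\,\theta_1$ at $p$. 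Differentiating $|\omega|^2=\omega_k\omega_k$ gives $|\omega|\nabla_j|\omega|=\omega_k\omega_{kj}=|\omega|\,\omega_{1j}$ at $p$, hence $\nabla_j|\omega|=\omega_{1j}$ and $|\nabla|\omega||^2=\sum_j\omega_{1j}^2$; in particular $|\omega_{11}|\le|\nabla|\omega||$.

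Next I would estimate $|\nabla\omega|^2=\sum_{i,j}\omega_{ij}^2$ by discarding the off-diagonal entries of the lower-right $(n-1)\times(n-1)$ block:
\[
|\nabla\omega|^2\ \ge\ \sum_j\omega_{1j}^2\ +\ \sum_{i\ge2}\omega_{i1}^2\ +\ \sum_{i\ge2}\omega_{ii}^2 .
\]
The first sum is $|\nabla|\omega||^2$; by symmetry the second equals $\sum_{j\ge2}\omega_{1j}^2=|\nabla|\omega||^2-\omega_{11}^2$; and Cauchy--Schwarz bounds the third below by $\tfrac1{n-1}\bigl(\sum_{i\ge2}\omega_{ii}\bigr)^2=\tfrac1{n-1}\bigl(\nabla f\cdot\omega-\omega_{11}\bigr)^2$. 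Abbreviating $a=|\nabla|\omega||$, $T=\nabla f\cdot\omega$, $b=\omega_{11}$, this gives $|\nabla\omega|^2\ge 2a^2-b^2+\tfrac1{n-1}(T-b)^2=:g(b)$.

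It then remains to minimise $g$ over $b\in[-a,a]$. The coefficient of $b^2$ in $g$ is $\tfrac1{n-1}-1=\tfrac{2-n}{n-1}\le0$, so $g$ is concave for $n\ge3$ and affine for $n=2$; in either case its minimum on $[-a,a]$ is attained at an endpoint, where $g(\pm a)=a^2+\tfrac1{n-1}(T\mp a)^2$. Since $\min\{(T-a)^2,(T+a)^2\}=(a-|T|)^2$, we obtain $|\nabla\omega|^2\ge a^2+\tfrac1{n-1}(a-|T|)^2$, which is precisely \eqref{eq kato}.

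For the rigidity statement one chases back the three inequalities used: equality in the block estimate forces $\omega_{ij}=0$ for $i,j\ge2$ with $i\neq j$; equality in Cauchy--Schwarz forces $\omega_{22}=\dots=\omega_{nn}=:\lambda_2$; and equality in the endpoint minimisation forces $\omega_{11}=\pm a$, whence $\omega_{11}^2=\sum_j\omega_{1j}^2$ yields $\omega_{1j}=\omega_{j1}=0$ for $j\ge2$. Thus $\nabla\omega=\operatorname{diag}(\lambda_1,\lambda_2,\dots,\lambda_2)$ with $\lambda_1=\omega_{11}$, and taking the trace $\lambda_1+(n-1)\lambda_2=\nabla f\cdot\omega$ gives the stated form \eqref{eq kato matrix}. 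I expect the only step needing genuine thought to be the minimisation over $b$ — noticing that for $n\ge3$ the relevant quadratic is concave, so the worst case is $|\omega_{11}|=|\nabla|\omega||$ — with everything else being bookkeeping for the symmetric, trace-$(\nabla f\cdot\omega)$ tensor $\nabla\omega$.
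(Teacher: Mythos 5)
Your argument is correct and is essentially the paper's proof: the same frame with $e_{1}=\omega/|\omega|$, the same use of Lemma \ref{closed} to get $\omega_{ij}=\omega_{ji}$ and $\sum_{i}\omega_{ii}=\nabla f\cdot\omega$, the identity $|\nabla|\omega||^{2}=\sum_{j}\omega_{1j}^{2}$, and the same Cauchy--Schwarz bound on the lower-right diagonal block; your only deviation is to package the paper's last two term-by-term estimates as the minimization of the concave quadratic $g(b)$ over $b\in[-a,a]$, which yields the identical bound a bit more transparently. One caveat you share with the paper: for $n=2$ with $\nabla f\cdot\omega=0$ your $g$ is constant in $b$, so equality does not force $\omega_{11}=\pm a$ and the diagonal form \eqref{eq kato matrix} can fail (e.g.\ $\nabla\omega=\left(\begin{smallmatrix}0 & 1\\ 1 & 0\end{smallmatrix}\right)$ gives equality in \eqref{eq kato}); this is a gap in the lemma's rigidity claim itself, which the paper never actually uses.
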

\begin{proof}
We can choose a local orthonormal basis $e_{1},\dots,e_{n}$ with
dual basis $\theta_{1},\dots,\theta_{n}$. Writing 
\[
\omega=\sum_{i=1}^{n}\omega_{i}\theta_{i}
\]
we have 
\[
d\omega=\sum_{i,j=1}^{n}\omega_{ij}\theta_{j}\wedge\theta_{i}
\]
and
\[
\delta_{f}\omega=-\sum_{i=1}^{n}\omega_{ii}+\sum_{i=1}^{n}\omega_{i}f_{i}.
\]
By Lemma \ref{closed} we know that $L_{f}^{2}$ harmonic forms are
closed and co-closed therefore 
\begin{equation}
\omega_{ij}=\omega_{ji}\label{eq kato 1}
\end{equation}
for $i,j=1,\dots,n$ and 
\begin{equation}
\sum_{i=1}^{n}\omega_{ii}=\nabla f\cdot\omega.\label{eq kato 2}
\end{equation}
In the rest of the proof we adapt from Li and Wang \cite{LW2001}.
We can set $e_{1}=\frac{\omega}{|\omega|}$. Computing 
\begin{align*}
|\nabla|\omega|^{2}|^{2} & =4\sum_{i=1}^{n}\left(\omega_{1}\omega_{1i}\right)^{2}\\
 & =4|\omega|^{2}\sum_{i=1}^{n}(\omega_{1i})^{2},
\end{align*}
we obtain 
\begin{equation}
|\nabla|\omega||^{2}=\sum_{j=1}^{n}\omega_{1j}^{2}.\label{eq kato 3}
\end{equation}
Using identities (\ref{eq kato 1}), (\ref{eq kato 2}) and (\ref{eq kato 3})
we get 
\begin{align*}
|\nabla\omega|^{2} & =\omega_{11}^{2}+\sum_{j=2}^{n}\omega_{1j}^{2}+\sum_{j=2}^{n}\omega_{j1}^{2}+\sum_{i=2}^{n}\omega_{ii}^{2}+\sum_{i,j=2,i\neq j}^{n}\omega_{ij}^{2}\\
 & \geq\omega_{11}^{2}+2\sum_{j=2}^{n}\omega_{1j}^{2}+\frac{1}{n-1}(\sum_{i=2}^{n}w_{ii})^{2}\\
 & =\omega_{11}^{2}+2\sum_{j=2}^{n}\omega_{1j}^{2}+\frac{1}{n-1}\left(-\omega_{11}+\nabla f\cdot\omega\right)^{2}\\
 & =\frac{n}{n-1}\omega_{11}^{2}+2\sum_{j=2}^{n}\omega_{1j}^{2}+\frac{1}{n-1}(\nabla f\cdot\omega){}^{2}-\frac{2}{n-1}\omega_{11}\nabla f\cdot\omega\\
 & \geq\frac{n}{n-1}|\nabla|\omega||^{2}+\frac{1}{n-1}(\nabla f\cdot\omega)^{2}-\frac{2}{n-1}|\nabla|\omega||\cdot|\nabla f\cdot\omega|\\
 & =\frac{1}{n-1}\left(|\nabla|\omega||-|\nabla f\cdot\omega|\right)^{2}+|\nabla|\omega||^{2}.
\end{align*}
Matrix (\ref{eq kato matrix}) follows by analyzing intermediate inequalities.\end{proof}
\begin{rem}
The following inequality holds under the same assumptions of Lemma
 \ref{kato}:
\begin{equation}
|\nabla\omega|^{2}\geq\frac{n}{n-1}|\nabla|\omega||^{2}+\frac{1}{n-1}(\nabla f\cdot\omega)^{2}-\frac{2}{n-1}|\omega|^{2}(\nabla f\cdot\omega)\nabla\omega(\omega,\omega).
\end{equation}

\end{rem}

\section{Non-Negative Bakry-Émery-Ricci Curvature}

In this section we prove some results concerning smooth metric measure
spaces with non-negative Bakry-Émery-Ricci curvature.

First we prove the following theorem.
\begin{thm}
\label{theorem}Let $(M^{n},g,e^{-f}dv)$ be a complete non-compact
smooth metric measure space with non-negative Bakry-Émery-Ricci curvature.
If the space of $L_{f}^{2}$ harmonic one-forms is non-trivial then
the weighted volume of $M^{n}$ is finite, that is
\[
\textnormal{vol}_{f}(M^{n})=\int_{M^{n}}e^{-f}dv<\infty,
\]
and the universal covering splits isometrically as $\tilde{M}^{n}=\mathbb{R}\times N^{n-1}$.\end{thm}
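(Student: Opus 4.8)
The plan is to prove first that $|\omega|$ is constant for any nontrivial $L_f^2$ harmonic one-form $\omega$; both conclusions of the theorem then follow quickly. By Lemma \ref{closed} the form $\omega$ is closed and co-closed, so $\Delta_f\omega=0$, and Bochner's formula (Lemma \ref{bochner}) together with $\mathrm{Ric}_f\geq 0$ and Kato's inequality (Lemma \ref{kato}, which in particular gives $|\nabla\omega|^2\geq|\nabla|\omega||^2$) yields the pointwise inequality
\[
\tfrac{1}{2}\Delta_f|\omega|^2=|\nabla\omega|^2+\mathrm{Ric}_f(\omega,\omega)\geq|\nabla|\omega||^2 .
\]

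Next I would run a Caccioppoli/cutoff estimate on the smooth function $|\omega|^2$. Taking $\phi$ with $\phi\equiv 1$ on $B_R$, $\phi\equiv 0$ off $B_{2R}$ and $|\nabla\phi|\leq 2/R$ (as in the proof of Lemma \ref{closed}), multiplying the displayed inequality by $\phi^2 e^{-f}$, integrating, integrating by parts in the term $\int\phi^2\Delta_f|\omega|^2e^{-f}dv$, and applying the Cauchy--Schwarz inequality gives
\[
\int_M\phi^2|\nabla|\omega||^2 e^{-f}dv\leq -2\int_M\phi\,|\omega|\,\langle\nabla|\omega|,\nabla\phi\rangle e^{-f}dv\leq 2\Big(\int_M\phi^2|\nabla|\omega||^2 e^{-f}dv\Big)^{1/2}\Big(\int_M|\omega|^2|\nabla\phi|^2 e^{-f}dv\Big)^{1/2}.
\]
Hence $\int_M\phi^2|\nabla|\omega||^2 e^{-f}dv\leq 4\int_M|\omega|^2|\nabla\phi|^2 e^{-f}dv\leq \tfrac{16}{R^2}\int_M|\omega|^2 e^{-f}dv$, and since $\omega\in L_f^2$, letting $R\to\infty$ forces $\nabla|\omega|\equiv 0$, so $|\omega|\equiv c$ for a constant $c$, with $c>0$ because $\omega\not\equiv 0$. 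The first assertion is then immediate, since $c^2\,\mathrm{vol}_f(M)=\int_M|\omega|^2 e^{-f}dv<\infty$.

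For the splitting, I would substitute $|\omega|^2\equiv c^2$ back into Bochner's formula to get $0=|\nabla\omega|^2+\mathrm{Ric}_f(\omega,\omega)$; both terms being non-negative, $\nabla\omega\equiv 0$, so $\omega$ is parallel and $X:=\omega^{\sharp}/c$ is a globally defined parallel unit vector field on $M$. Passing to the universal cover $\tilde M^n$: since $\omega$ is closed and $\tilde M$ is simply connected, the lift $\tilde\omega$ is exact, say $\tilde\omega=dh$ with $h\colon\tilde M\to\mathbb{R}$, and then $\nabla h=c\tilde X$, so $|\nabla h|\equiv c$ and $\mathrm{Hess}\,h=\nabla\tilde\omega\equiv 0$. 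I would then invoke the standard fact that a complete simply connected manifold carrying a function with parallel gradient of constant nonzero norm splits isometrically (de Rham decomposition along the parallel line field spanned by $\tilde X$ and its parallel orthogonal complement, or, concretely, via the flow of $\nabla h$), obtaining $\tilde M^n=\mathbb{R}\times N^{n-1}$ with $N=(h/c)^{-1}(0)$.

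I expect the only genuine work to be the cutoff step, together with the mild observation that $|\nabla|\omega||^2\leq|\nabla\omega|^2$ keeps all the integrals finite, so that the division in the Cauchy--Schwarz estimate is legitimate; this step uses only completeness of $M$ and $\omega\in L_f^2$, and no curvature bound beyond $\mathrm{Ric}_f\geq 0$. Everything after $|\omega|$ is shown to be constant is soft: the weighted volume bound is nothing but the finiteness of $\|\omega\|_{L_f^2}^2$, and the isometric splitting of $\tilde M^n$ is forced by the equality case of Bochner's formula.
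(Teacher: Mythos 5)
Your proposal is correct and follows essentially the same route as the paper: Bochner plus Kato gives $f$-subharmonicity of $|\omega|$, a cutoff/Caccioppoli argument forces $|\omega|$ to be a nonzero constant (whence $\mathrm{vol}_f(M)<\infty$), and a second pass through Bochner's formula makes $\omega$ parallel, so the universal cover splits by de Rham. The only difference is cosmetic: you close the Caccioppoli estimate by integral Cauchy--Schwarz and division, where the paper uses Young's inequality, and you spell out the splitting via the primitive $h$ of the lifted form rather than citing de Rham directly.
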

\begin{proof}
Fix a non-trivial $L_{f}^{2}$ harmonic one-form $\omega$ on $M$.
Since the Bakry-Émery-Ricci curvature is non-negative and the one-form
$\omega$ is $f$-harmonic by Lemma \ref{bochner} we have
\[
\frac{1}{2}\Delta_{f}|\omega|^{2}\geq|\nabla\omega|^{2},
\]
so
\[
|\omega|\Delta_{f}|\omega|\geq|\nabla\omega|^{2}-|\nabla|\omega||^{2},
\]
hence by lemma \ref{kato} we have
\begin{equation}
|\omega|\Delta_{f}|\omega|\geq0.\label{eq theorem 1}
\end{equation}
We can choose a smooth function $\phi$ on $M$ such that $\phi=1$
in $B_{R}$, $\phi=0$ in $M\setminus B_{2R}$ and $|\nabla\phi|\leq\frac{2}{R}$
in $B_{2R}\setminus B_{R}$. Here $B_{R}$ denotes a ball with center
in a fixed point and radius $R$. Multiplying inequality (\ref{eq theorem 1})
by $\phi^{2}$ and integrating by parts we get 
\[
\int_{M}|\nabla|\omega||^{2}\phi^{2}e^{-f}dv\leq-2\int_{M}|\omega|\phi\nabla|\omega|\cdot\nabla\phi e^{-f}dv.
\]
Using Cauchy-Schuwarz and Young's inequalities we get 
\[
\frac{1}{2}\int_{M}|\nabla|\omega||^{2}\phi^{2}e^{-f}dv\leq2\int_{M}|\omega|^{2}|\nabla\phi|^{2}e^{-f}dv.
\]
Since $\omega$ is an $L_{f}^{2}$ differental one-form by the monotone
convergence theorem sending $R\to\infty$ we get 
\[
\frac{1}{2}\int_{M}|\nabla|\omega||^{2}e^{-f}dv\leq0,
\]
so the function $|\omega|$ is a constant $c$. Since 
\[
\int_{M}|\omega|^{2}e^{-f}dv=c^{2}\mbox{vol}_{f}(M)<\infty
\]
and $c\neq0$ it follows that the weighted volume $\mbox{vol}_{f}(M)$
is finite. Now we claim that $\omega$ is parallel. Indeed, since
$|\omega|$ is constant by Bochner's formula (Lemma \ref{bochner})
and the assumption in the Bakry-Émery-Ricci curvature we have 
\[
0\geq|\nabla\omega|^{2},
\]
so $\omega$ is parallel. Now the lifting of $\omega$ to the universal
cover $\tilde{M}$ is a non-trivial parallel one-form, which concludes
the proof by the de-Rham decomposition theorem.
\end{proof}
In particular we obtain the following result.
\begin{cor}
\label{corollary 1}Let $(M,g,e^{-f}dv)$ be a complete non-compact
smooth metric measure space with non-negative Bakry-Émery-Ricci curvature.
If the first eigenvalue of the $f$-Laplacian is positive then the
space of $L_{f}^{2}$ harmonic one-forms is trivial.\end{cor}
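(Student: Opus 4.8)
The plan is to argue by contradiction, combining Theorem \ref{theorem} with the variational characterization of the first eigenvalue of the $f$-Laplacian. Suppose the space of $L_f^2$ harmonic one-forms is non-trivial; fix such an $\omega \not\equiv 0$. By the proof of Theorem \ref{theorem}, $|\omega|$ is a positive constant $c$ and $\omega$ is parallel, so in particular $\nabla f \cdot \omega = \delta_f \omega + \delta\omega$... more directly, since $\omega$ is parallel we have $|\nabla|\omega|| = 0$, and the key point is that the \emph{component} $u := \nabla f \cdot \omega$ (the function obtained by pairing $\omega$ with $\nabla f$, equivalently $\iota_{\nabla f}\omega$) should serve as an eigenfunction-like object, or else we use $\omega$ itself directly. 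Actually the cleanest route: because $\delta_f \omega = 0$ (Lemma \ref{closed}) and $\omega$ is parallel, one checks that the locally-defined primitive of $\omega$ — write $\omega = dh$ locally since $d\omega = 0$ — satisfies $\Delta_f h = -\delta_f dh = -\delta_f \omega = 0$, i.e. $h$ is a (possibly multi-valued, but locally well-defined) $f$-harmonic function with $|\nabla h| = |\omega| = c$ constant.

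The main step is then to exploit the positivity of the first eigenvalue $\lambda_1(\Delta_f) > 0$. The standard Poincaré-type inequality it yields is $\lambda_1 \int_M \varphi^2 e^{-f} dv \le \int_M |\nabla \varphi|^2 e^{-f} dv$ for all compactly supported $\varphi$. First I would record that $\mathrm{vol}_f(M) < \infty$ by Theorem \ref{theorem}. I would like to test this inequality against $\varphi$ built from $h$ — but $h$ need not be globally defined or bounded, so instead I would test against a cutoff $\phi$ of the usual type ($\phi = 1$ on $B_R$, $\phi=0$ outside $B_{2R}$, $|\nabla\phi| \le 2/R$): apply the spectral inequality to $\varphi = \phi$ itself, giving $\lambda_1 \int_{B_R} e^{-f} dv \le \lambda_1 \int_M \phi^2 e^{-f} dv \le \int_M |\nabla\phi|^2 e^{-f} dv \le \frac{4}{R^2}\,\mathrm{vol}_f(B_{2R}\setminus B_R) \le \frac{4}{R^2}\,\mathrm{vol}_f(M)$. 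Since $\mathrm{vol}_f(M)$ is finite, letting $R \to \infty$ forces $\lambda_1 \int_M e^{-f} dv \le 0$, hence $\mathrm{vol}_f(M) = 0$, a contradiction (the weighted volume of a non-empty manifold is positive).

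Therefore no non-trivial $L_f^2$ harmonic one-form can exist, which is the claim. The one subtlety — and the place where I expect the argument could go wrong if stated carelessly — is the precise meaning of ``the first eigenvalue of the $f$-Laplacian is positive'' on a non-compact manifold: it should be interpreted as the bottom of the spectrum, $\lambda_1(\Delta_f) = \inf_{\varphi \in C_c^\infty(M), \varphi \not\equiv 0} \frac{\int_M |\nabla\varphi|^2 e^{-f}dv}{\int_M \varphi^2 e^{-f}dv} > 0$, and I would make this explicit so that the test-function computation above is justified. Given that reading, the proof is short: finiteness of $\mathrm{vol}_f(M)$ from Theorem \ref{theorem}, the cutoff test function, and letting $R \to \infty$. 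The only real obstacle is thus conceptual (pinning down the definition of $\lambda_1$) rather than computational; everything else is the same cutoff-integration-by-parts device already used twice in the paper.
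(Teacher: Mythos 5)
Your argument is correct and is essentially the paper's proof: the paper shows $\lambda_1(\Delta_f)>0$ forces $\mathrm{vol}_f(M)=\infty$ via the same cutoff test function and then invokes Theorem \ref{theorem}, whereas you run the identical computation as a direct contradiction after first extracting $\mathrm{vol}_f(M)<\infty$ from the theorem. Your opening paragraph about parallel forms and local primitives $h$ is never used and can be deleted; the two-line cutoff argument in your second paragraph is the whole proof, and your reading of $\lambda_1$ as the bottom of the spectrum is exactly the definition the paper adopts.
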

\begin{proof}
By assumption the first eigenvalue of the $f$-Laplacian
\[
\lambda_{1}(\Delta_{f})=\inf_{\phi\in C_{c}^{\infty}(M)}\frac{\int_{M}|\nabla\phi|^{2}e^{-f}dv}{\int_{M}\phi^{2}e^{-f}dv}
\]
is positive. We can choose a smooth function $\phi$ on $M$ such
that $\phi=1$ in $B_{R}$, $\phi=0$ in $M\setminus B_{2R}$ and
$|\nabla\phi|\leq\frac{2}{R}$ in $B_{2R}\setminus B_{R}$. Then
\[
\lambda_{1}(\Delta_{f})\int_{M}\phi^{2}e^{-f}dv\leq\frac{4}{R^{2}}\mbox{vol}_{f}(M).
\]
We claim that $\mbox{vol}_{f}(M)$ is not finite. Otherwise by the
monotone convergence sending $R\to\infty$ we get $\lambda_{1}(\Delta_{f})=0$,
a contradiction. Now since $\mbox{vol}_{f}(M)=\infty$ it follows
from Theorem \ref{theorem} that the space of $L_{f}^{2}$ harmonic
forms is trivial.
\end{proof}
Recall that a gradient steady Ricci soliton is a manifold $(M,g)$
together with a smooth function $f$ satisfying
\[
\mbox{Ric}+\mbox{Hess}f=0.
\]
In this case, it is possible to prove that the scalar curvature $R$
is non-negative and there is a constant $a$ such that
\[
R+|\nabla f|^{2}=a
\]
(see \cite{C2010}). It follows that $|\nabla f|\leq\sqrt{a}$, so
the only \emph{non-trivial} gradient steady Ricci solitons are those
with $a>0$, which must be non-compact (see \cite{C2010}). It was
proved by Munteanu and Wang in \cite{MW2011} that the first eigenvalue
of the $f$-Laplacian on non-trivial gradient steady Ricci solitions
is positive, more precisely $\lambda_{1}(\Delta_{f})=a^{2}/4$, so
Corollary \ref{corollary 1} implies the following result.
\begin{cor}
\label{corollary 2}On a complete non-compact non-trivial gradient
steady Ricci soliton $(M,g,f)$ the space of $L_{f}^{2}$ harmonic
one-forms is trivial.
\end{cor}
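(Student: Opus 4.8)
The plan is to check that a complete non-compact non-trivial gradient steady Ricci soliton satisfies all the hypotheses of Corollary \ref{corollary 1} and then apply that corollary verbatim. First I would note that the soliton equation $\mbox{Ric}+\mbox{Hess}f=0$ is precisely the statement $\mbox{Ric}_{f}=0$, so in particular the Bakry-Émery-Ricci curvature is non-negative; combined with the standing assumptions that $(M,g,f)$ is complete and non-compact, the two structural hypotheses of Corollary \ref{corollary 1} are in place. It remains only to produce the spectral hypothesis, namely $\lambda_{1}(\Delta_{f})>0$.

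For that I would recall the two facts from \cite{C2010} quoted just above the statement: the scalar curvature satisfies $R\geq 0$, and there is a constant $a$ with $R+|\nabla f|^{2}=a$. Since both $R$ and $|\nabla f|^{2}$ are non-negative we automatically get $a\geq 0$, and $a=0$ would force $\nabla f\equiv 0$ together with $R\equiv 0$, i.e.\ the Ricci-flat product case with $f$ constant; this is exactly the trivial soliton that is excluded by hypothesis. Hence non-triviality gives $a>0$. Then the Munteanu--Wang computation \cite{MW2011} of the bottom of the spectrum of the drifted Laplacian on such solitons, $\lambda_{1}(\Delta_{f})=a^{2}/4$, is strictly positive.

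With $\mbox{Ric}_{f}\geq 0$ and $\lambda_{1}(\Delta_{f})>0$ established, Corollary \ref{corollary 1} applies directly and yields that the space of $L_{f}^{2}$ harmonic one-forms on $(M,g,f)$ is trivial, which is the assertion. I do not anticipate any genuine obstacle in this argument: every analytic input (Bochner's formula, Kato's inequality, the finiteness/splitting dichotomy of Theorem \ref{theorem}, and the spectral estimate) is already available, so the work is entirely bookkeeping. The only point deserving a line of care is the passage from \emph{non-trivial} to the strict inequality $a>0$ — and hence to $\lambda_{1}(\Delta_{f})>0$ rather than merely $\lambda_{1}(\Delta_{f})\geq 0$ — since that strictness is what drives the vanishing conclusion through Theorem \ref{theorem}.
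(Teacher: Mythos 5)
Your proposal is correct and follows essentially the same route as the paper: the paper's justification for this corollary is exactly the observation that $\mbox{Ric}_{f}=0$, that non-triviality forces $a>0$ in the identity $R+|\nabla f|^{2}=a$, and that the Munteanu--Wang result $\lambda_{1}(\Delta_{f})=a^{2}/4>0$ lets Corollary \ref{corollary 1} apply. Your extra line of care about deducing $a>0$ from non-triviality is a welcome explicit version of a step the paper only asserts.
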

It was proved by Wei-Wyllie in \cite{WW2009} that on a complete non-compact
smooth metric measure space $(M,g,e^{-f}dv)$ if the function $f$
is bounded and the Bakry-Émery-Ricci curvature is non-negative then
the weighted volume $\mbox{vol}_{f}(M)$ is infinite, so Theorem \ref{theorem}
has the following consequence.
\begin{cor}
\label{corollary 3}Let $(M,g,e^{-f}dv)$ be a complete non-compact
smooth metric measure space with non-negative Bakry-Émery-Ricci curvature.
If the function $f$ is bounded then the space of $L_{f}^{2}$ harmonic
one-forms is trivial.\end{cor}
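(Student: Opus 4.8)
The plan is to argue by contradiction, using Theorem \ref{theorem} to extract a finiteness statement for the weighted volume and then contradicting it with the Wei--Wyllie volume estimate. So first I would suppose that the space of $L_f^2$ harmonic one-forms on $(M,g,e^{-f}dv)$ is non-trivial. Since $M$ is complete non-compact and has non-negative Bakry--Émery--Ricci curvature, the hypotheses of Theorem \ref{theorem} are exactly met, and it yields
\[
\mathrm{vol}_f(M^n)=\int_{M^n}e^{-f}dv<\infty .
\]

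On the other hand, the theorem of Wei--Wyllie in \cite{WW2009} asserts that on a complete non-compact smooth metric measure space with $\mathrm{Ric}_f\geq 0$ and $f$ bounded, the weighted volume $\mathrm{vol}_f(M)$ is infinite. Since all of these hypotheses hold here (completeness, non-compactness, and non-negative Bakry--Émery--Ricci curvature are standing assumptions, and boundedness of $f$ is the extra hypothesis of the corollary), we get $\mathrm{vol}_f(M^n)=\infty$, contradicting the finiteness obtained above. Hence no non-trivial $L_f^2$ harmonic one-form can exist, i.e. the space of $L_f^2$ harmonic one-forms is trivial.

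The only point requiring any care is the bookkeeping of hypotheses: one should verify that the assumptions feeding into Theorem \ref{theorem} and into the Wei--Wyllie estimate are jointly consistent with the hypotheses of the corollary, which they plainly are. I do not anticipate a genuine obstacle here; the entire content of the corollary is the observation that the finite-weighted-volume alternative of the dichotomy in Theorem \ref{theorem} is incompatible with bounded $f$ by \cite{WW2009}, so the harmonic one-form cannot be present in the first place. (One could alternatively try to route through Corollary \ref{corollary 1}, but bounded $f$ does not directly force $\lambda_1(\Delta_f)>0$, so the direct route via Theorem \ref{theorem} and \cite{WW2009} is the clean one.)
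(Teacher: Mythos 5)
Your argument is correct and is exactly the paper's: the corollary is stated as an immediate consequence of combining Theorem \ref{theorem} (a non-trivial $L_f^2$ harmonic one-form forces $\mathrm{vol}_f(M)<\infty$) with the Wei--Wylie result from \cite{WW2009} that bounded $f$ and $\mathrm{Ric}_f\geq 0$ force $\mathrm{vol}_f(M)=\infty$. Nothing further is needed.
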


\lyxaddress{Departamento de Matemática, Universidade Federal do Espírito Santo,
29075-910, Vitória, Brazil}

\lyxaddress{matheus.vieira@ufes.br}
\end{document}